\begin{document}

\setlength{\parskip}{3mm}

\newtheorem{thm}{Theorem}
\newtheorem{lem}[thm]{Lemma}
\newtheorem{prop}[thm]{Proposition}
\newtheorem{cor}[thm]{Corollary}
\newtheorem{hyp}[thm]{Hypothesis}
\newtheorem{que}{Question}

\theoremstyle{definition}
\newtheorem{defn}[thm]{Definition}

\newcommand{\conda}{(*)}
\newcommand{\condb}{(**)}

\newcommand{\bC}{\mathbb{C}}
\newcommand{\bF}{\mathbb{F}}
\newcommand{\bN}{\mathbb{N}}
\newcommand{\bP}{\mathbb{P}}
\newcommand{\bQ}{\mathbb{Q}}
\newcommand{\bR}{\mathbb{R}}
\newcommand{\bZ}{\mathbb{Z}}

\newcommand{\mcA}{\mathcal{A}}
\newcommand{\mcB}{\mathcal{B}}
\newcommand{\mcC}{\mathcal{C}}
\newcommand{\mcD}{\mathcal{D}}
\newcommand{\mcE}{\mathcal{E}}
\newcommand{\mcF}{\mathcal{F}}
\newcommand{\mcG}{\mathcal{G}}
\newcommand{\mcH}{\mathcal{H}}
\newcommand{\mcI}{\mathcal{I}}
\newcommand{\mcJ}{\mathcal{J}}
\newcommand{\mcK}{\mathcal{K}}
\newcommand{\mcL}{\mathcal{L}}
\newcommand{\mcM}{\mathcal{M}}
\newcommand{\mcN}{\mathcal{N}}
\newcommand{\mcO}{\mathcal{O}}
\newcommand{\mcP}{\mathcal{P}}
\newcommand{\mcQ}{\mathcal{Q}}
\newcommand{\mcR}{\mathcal{R}}
\newcommand{\mcS}{\mathcal{S}}
\newcommand{\mcT}{\mathcal{T}}
\newcommand{\mcU}{\mathcal{U}}
\newcommand{\mcV}{\mathcal{V}}
\newcommand{\mcW}{\mathcal{W}}
\newcommand{\mcX}{\mathcal{X}}
\newcommand{\mcY}{\mathcal{Y}}
\newcommand{\mcZ}{\mathcal{Z}}

\title{On (hereditarily) just infinite profinite groups that are not virtually pro-$p$}

\author{Colin D. Reid\\
Mathematisches Institut\\
Georg-August Universit\"{a}t G\"{o}ttingen\\
Bunsenstra\ss{}e 3-5, D-37073 G\"{o}ttingen\\
Germany\\
colin@reidit.net}

\maketitle

\begin{abstract}A profinite group $G$ is just infinite if every non-trivial closed normal subgroup of $G$ is of finite index, and hereditarily just infinite if every open subgroup is just infinite.  Hereditarily just infinite profinite groups need not be virtually pro-$p$, as shown in a recent paper of Wilson.  The same paper gives a criterion on an inverse system of finite groups that is sufficient to ensure the limit is either virtually abelian or hereditarily just infinite.  We give criteria of a similar nature that characterise the just infinite and hereditarily just infinite properties under the assumption that $G$ is not virtually pro-$p$.\end{abstract}

\begin{defn}In this paper, all groups will be profinite groups, all homomorphisms are required to be continuous, and all subgroups are required to be closed; the notation $[A,B]$ is understood to mean the closure of the commutator of $A$ and $B$.  We use `pronilpotent' to mean a group that is the inverse limit of finite nilpotent groups.  A profinite group $G$ is \emph{just infinite} if it is infinite, and every non-trivial normal subgroup of $G$ is of finite index; it is \emph{hereditarily just infinite} if in addition $H$ is just infinite for every open subgroup $H$ of $G$.\end{defn}

Most results to date on just infinite profinite groups, and especially on hereditarily just infinite profinite groups, concern those which are pro-$p$ or at least virtually pro-$p$.  It is easy to see that such groups are the only just infinite virtually pronilpotent groups.  By contrast a recent paper of J. S. Wilson (\cite{Wil}) gives the first known constructions of hereditarily just infinite profinite groups that are not virtually pro-$p$.  The present paper expands on a result in \cite{Wil} that gives a sufficient condition on an inverse system of finite groups for the limit to be hereditarily just infinite.

\begin{thm}[Wilson \cite{Wil} 2.2]\label{wilthm} Let $G$ be the inverse limit of a sequence $(G_n)_{n \geq 0}$ of finite groups and surjective homomorphisms $G_n \rightarrow G_{n-1}$.  For each $n \geq 1$ write $K_n = \ker(G_n \rightarrow G_{n-1})$, and suppose that for all $L \unlhd G_n$ such that $L \not\le K_n$ the following assertions hold:
\begin{enumerate}[(i)]  \itemsep0pt
\item $K_n \leq L$;
\item $L$ has no proper subgroup whose distinct $G_n$-conjugates centralise each other and generate $L$.
\end{enumerate}
Then $G$ is a just infinite profinite group and is either virtually abelian or hereditarily just infinite.\end{thm}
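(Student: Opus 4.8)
The plan is to prove the two assertions of the theorem separately: (a) that $G$ is just infinite, and (b) that if $G$ is not virtually abelian then $G$ is hereditarily just infinite. Write $\pi_n \colon G \to G_n$ for the projections and $W_n = \ker \pi_n$, so that $(W_n)_{n \ge 0}$ is a descending chain of open normal subgroups of $G$ with $\bigcap_n W_n = 1$, $W_{n-1}/W_n \cong K_n$, and $\pi_n^{-1}(K_n) = W_{n-1}$. (I take $G$ to be infinite, as the hypotheses surely intend --- equivalently $K_n \ne 1$ for infinitely many $n$ --- since a finite $G$ is not just infinite.)

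For (a), let $N$ be a non-trivial closed normal subgroup of $G$ and let $m \ge 0$ be least with $N \not\le W_m$. For every $n \ge m+1$ we then have $N \not\le W_{n-1}$, hence $\pi_n(N) \not\le K_n$, so hypothesis (i) applied to $L = \pi_n(N) \unlhd G_n$ gives $K_n \le \pi_n(N)$; pulling this back, $W_{n-1} = \pi_n^{-1}(K_n) \le N W_n$. Iterating from $n = m+1$ upwards: given $x \in W_m$ we may successively write $x = y_1 y_2 \cdots y_j z_j$ with $y_i \in N$ and $z_j \in W_{m+j}$, so the partial products $y_1 \cdots y_j = x z_j^{-1}$ converge to $x$; since $N$ is closed, $x \in N$. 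Thus $W_m \le N$, so $N$ is open and of finite index, and $G$ is just infinite.

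For (b), suppose $G$ is not virtually abelian. I would then invoke the structure theory of just infinite profinite groups (essentially due to Wilson; see \cite{Wil} and the references there): such a $G$ is either hereditarily just infinite, or there is a non-trivial closed subnormal subgroup $H$ of $G$ with $2 \le k := |G : N_G(H)| < \infty$ whose $k$ distinct conjugates $H_1 = H, H_2, \dots, H_k$ pairwise commute and have open product $M := H_1 H_2 \cdots H_k$. In the first case we are done, so suppose we are in the second (``branch-like'') case, which I claim is incompatible with (ii). Since $G$ permutes the $H_i$, $M$ is normal in $G$; and $M$ cannot be abelian, since then the open subgroup $M$ would witness that $G$ is virtually abelian. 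Hence $M' \ne 1$, and as $M$ is open and $\bigcap_n W_n = 1$ we may fix $n$ with $M \not\le W_{n-1}$ and $M' \not\le W_n$. Put $L := \pi_n(M) \unlhd G_n$: then $L \not\le K_n$ (because $M \not\le W_{n-1} = \pi_n^{-1}(K_n)$) and $L$ is non-abelian (because $M' \not\le W_n$). Now set $P := \pi_n(H_1) \le L$. The distinct $G_n$-conjugates of $P$ occur among $\pi_n(H_1), \dots, \pi_n(H_k)$ (as $\pi_n$ is surjective and $G$ permutes the $H_i$), they commute pairwise since $[\pi_n(H_i), \pi_n(H_j)] = \pi_n([H_i,H_j]) = 1$ for $i \ne j$, and together they generate $\pi_n(\langle H_1, \dots, H_k \rangle) = L$. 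Finally $P$ is proper in $L$: if $P = L$ then, since each $\pi_n(H_i)$ is $G_n$-conjugate to $P$ while $L = \pi_n(M)$ is normal in $G_n$, we would get $\pi_n(H_i) = L$ for all $i$, whence $[L, L] = [\pi_n(H_1), \pi_n(H_2)] = 1$, contradicting that $L$ is non-abelian. So $L$ has a proper subgroup whose distinct $G_n$-conjugates centralise each other and generate $L$ --- contradicting (ii). Hence the branch-like case does not occur, and $G$ is hereditarily just infinite.

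The conceptual heart --- and the main obstacle to a self-contained proof --- is the structural dichotomy used in the last paragraph: extracting an honest commuting product of conjugate subgroups from the mere failure of the hereditarily just infinite property requires Wilson's analysis of just infinite groups (including the facts that $Z(G) = 1$ for just infinite non-virtually-abelian $G$, and that a minimal non-trivial subnormal subgroup yields pairwise-commuting conjugates), which one would have to reproduce in the profinite category. A smaller but real technical point is that $\pi_n(M)$ need not be a direct product of the $\pi_n(H_i)$ --- images do not preserve directness --- so one cannot argue via direct factors; what rescues the argument is that only the properness of $\pi_n(H_1)$ in $\pi_n(M)$ is needed, and this is equivalent to $\pi_n(M)$ being non-abelian, which is guaranteed for all large $n$ precisely because $G$ is not virtually abelian.
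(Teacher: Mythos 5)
The paper does not actually prove Theorem \ref{wilthm}: it is imported verbatim from Wilson's paper (\cite{Wil}, Theorem 2.2), so there is no in-paper proof to compare against. Judged on its own merits, your argument is sound, and it closely parallels the paper's proof of the analogous Theorem \ref{thma}. Part (a) is complete and elementary: from hypothesis (i) you get $W_{n-1} \le NW_n$ for all large $n$, and the telescoping plus closedness of $N$ gives $W_m \le N$; this is correct (and your aside that one must tacitly assume $G$ infinite, i.e.\ $K_n \ne 1$ infinitely often, is a fair reading of the statement). In part (b) the reduction from the ``branch-like'' configuration to a violation of hypothesis (ii) is also correct: your choice of $n$ with $M \not\le W_{n-1}$ and $M' \not\le W_n$ guarantees $L = \pi_n(M) \not\le K_n$ and $L$ non-abelian, and your observation that properness of $\pi_n(H_1)$ in $L$ follows from non-abelianness of $L$ (rather than from any preserved directness) is exactly the right technical point.

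The one genuine dependency is the dichotomy you invoke at the start of part (b): that a just infinite, non-virtually-abelian profinite group which is not hereditarily just infinite possesses a proper subgroup whose distinct conjugates centralise each other and generate an open normal subgroup. You flag this honestly as a black box, and it is worth noting that the present paper does exactly the same thing: its proof of Theorem \ref{thma} cites precisely this fact as \cite{Wil}~2.1 (in the form of a non-trivial subgroup $U$ of infinite index whose distinct conjugates centralise each other, whose normal closure is then open by just infiniteness). So your proposal is not missing anything the paper supplies; the structural dichotomy is external input in both treatments, and a fully self-contained proof would require reproducing Wilson's structure-lattice analysis. If you wanted to minimise the imported statement, the weaker form quoted in the paper (infinite-index $U$, possibly with infinitely many conjugates) already suffices for your argument, since for large $n$ the image $\pi_n(U)$ is still proper in $\pi_n(\langle U^G\rangle)$ and has only finitely many pairwise-centralising $G_n$-conjugates.
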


We give similar conditions on an inverse system which ensure that the limit is just infinite or respectively hereditarily just infinite (Theorem \ref{thma}).  This is not quite a direct generalisation of Theorem \ref{wilthm}, not least because Theorem \ref{wilthm} allows for some pronilpotent groups such as $\bZ_p$ whereas the inverse limits in Theorem \ref{thma} are never virtually pronilpotent, but the results are closely related (see Proposition \ref{thmatowil}).  More significantly, the conditions in Theorem \ref{thma} turn out to characterise just infinite or hereditarily just infinite profinite groups that are not virtually pro-$p$, in that every such group is the limit of an inverse system of the prescribed form; indeed, one can impose apparently stronger conditions (Theorem \ref{thmb}).

\begin{defn}Let $G$ be a profinite group and let $A$ and $B$ be normal subgroups of $G$ such that $B < A$.  Say $A/B$ is a \emph{chief factor} of $G$ if there are no normal subgroups of $G$ lying strictly between $A$ and $B$.  Say $(A,B)$ is a \emph{critical pair} in $G$ if $B$ contains every normal subgroup of $G$ that is properly contained in $A$.\end{defn}

Note that given a critical pair $(A,B)$, then $A/B$ is always a chief factor of $G$, and that if $(A,B)$ is critical in $G$, then $(A/N,B/N)$ is critical in $G/N$ for any $N \unlhd G$ such that $N \leq B$.  Critical pairs have a further useful property concerning centralisers that can be used to establish the (hereditary) just infinite property.

\begin{lem}\label{critlem}Let $G$ be a finite group with critical pair $(A,B)$, and let $K$ be a normal subgroup of $G$ that is not contained in $C_G(A/B)$.  Then $K \geq A$ and $K$ is not nilpotent.\end{lem}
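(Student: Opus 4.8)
The plan is to extract both conclusions from a single fact, namely that $[A,K] = A$.

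First I would unwind the hypothesis: saying $K \not\le C_G(A/B)$ is the same as saying $[K,A] \not\le B$, because $C_G(A/B)$ is by definition $\{g \in G : [g,A] \le B\}$, so that $K \le C_G(A/B)$ if and only if $[K,A] \le B$. (This uses that $A,B \unlhd G$, so that $G$ acts on $A/B$ by conjugation.)

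Next comes the key step. The subgroup $[A,K]$ is normal in $G$, being the commutator of two normal subgroups, and it lies inside $A$ since $A \unlhd G$. By the definition of a critical pair, every normal subgroup of $G$ properly contained in $A$ is contained in $B$; since $[A,K] \not\le B$, it follows that $[A,K]$ is \emph{not} properly contained in $A$, so $[A,K] = A$. As $K \unlhd G$ gives $[A,K] \le K$, I then obtain $A = [A,K] \le K$, which is the first assertion.

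For the second assertion I would argue by contradiction: assume $K$ is nilpotent, of class $c$. Starting from $A \le K = \gamma_1(K)$ and repeatedly applying $[A,K] = A$, an easy induction on $i$ shows $A = [A,\underbrace{K,\dots,K}_{i}] \le \gamma_{i+1}(K)$ for every $i \ge 0$; taking $i = c$ forces $A \le \gamma_{c+1}(K) = 1$, which is absurd since $B < A$ implies $A \ne 1$. Hence $K$ is not nilpotent.

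I do not expect a serious obstacle here: the whole argument rests on recognising that $[A,K]$ must coincide with $A$, after which everything is immediate. The only points that need a line of justification are the routine facts that $[A,K] \le A$, that centralising $A/B$ is equivalent to $[\,\cdot\,,A] \le B$, and that a nilpotent group $K$ admits no non-trivial normal subgroup $N$ with $[N,K] = N$ (which is precisely what the final induction encodes).
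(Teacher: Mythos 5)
Your proposal is correct and follows essentially the same route as the paper: both identify $[A,K]=A$ as the key consequence of criticality and then deduce $A\leq K$ together with non-nilpotency by observing that $A$ sits inside every term of the lower central series of $K$. Your write-up merely spells out the induction and the equivalence $K\leq C_G(A/B)\Leftrightarrow [K,A]\leq B$ in more detail than the paper does.
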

\begin{proof}As $K$ does not centralise $A/B$, it follows that $[A,K] \not\le B$.  But $[A,K]$ is a normal subgroup of $G$ contained in $A$; hence $[A,K] = A$ since $(A,B)$ is critical in $G$.  In turn, the equation $[A,K]=A$ means that all terms of the lower central series of $K$ contain $A > 1$, so $K \geq A$ and $K$ is not nilpotent.\end{proof}

\begin{thm}\label{thma}Let $G$ be the inverse limit of an inverse system $\Lambda = (G_n)_{n\geq0}$ of finite groups, where $G_n$ contains a specified normal subgroup $A_n$, with associated surjective homomorphisms $\rho_n: G_{n+1} \rightarrow G_n$.  Write $P_n = \rho_n(A_{n+1})$, and write $B_{n+1} = \ker\rho_n$.  Suppose that $(A_n,B_n)$ is a critical pair in $G_n$, and that $P_nC_{G_n}(P_n) \leq B_n$, for all but finitely many $n$.  Then $G$ is just infinite and not virtually pronilpotent.

Suppose that in addition, the following condition holds:

$\conda$ For infinitely many $n$, if $U$ is a subgroup of $G_n$ whose distinct $G_n$-conjugates centralise each other and generate a subgroup of $G$ containing $A_n$, then $U \unlhd G_n$.

Then $G$ is hereditarily just infinite.\end{thm}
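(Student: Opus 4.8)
The plan is to argue throughout inside $G$, writing $\pi_n\colon G\to G_n$ for the canonical surjections and setting $\hat A_n=\pi_n^{-1}(A_n)$, $\hat B_n=\pi_n^{-1}(B_n)$, $\hat C_n=\pi_n^{-1}(C_{G_n}(A_n/B_n))$, all closed normal subgroups of $G$. The first step is bookkeeping: $\hat B_{n+1}=\ker\pi_n$, so $\bigcap_n\hat B_n=1$; $\hat A_n/\hat B_n\cong A_n/B_n$; and since $\pi_n(\hat A_{n+1})=\rho_n(A_{n+1})=P_n$ while $\pi_n^{-1}(C_{G_n}(P_n))=\{g\in G:[g,\hat A_{n+1}]\le\ker\pi_n\}=\hat C_{n+1}$, the assumption $P_nC_{G_n}(P_n)\le B_n$ translates to $\hat A_{n+1}\hat C_{n+1}\le\hat B_n$ for all but finitely many $n$. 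Hence $\bigcap_n\hat A_n=\bigcap_n\hat C_n=1$; and since $\hat A_{n+1}\le\hat B_n<\hat A_n$, the chain $(\hat A_n)$ descends strictly from $\hat A_0\ne1$, so $G$ is infinite and $G/\hat A_n\cong G_n/A_n$.

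The crux of the first assertion is the single structural fact that \emph{$(\hat A_n,\hat B_n)$ is a critical pair in $G$ for all but finitely many $n$}. To prove it, let $M\unlhd G$ with $M\le\hat A_n$ and $M\not\le\hat B_n$. Then $\pi_n(M)$ is normal in $G_n$, contained in $A_n$ but not in $B_n$, so criticality of $(A_n,B_n)$ gives $\pi_n(M)=A_n$, i.e. $M\hat B_{n+1}=\hat A_n$. One then descends the tower, showing $M\hat B_{n+j}=\hat A_n$ for every $j\ge1$ and concluding $M=\bigcap_j M\hat B_{n+j}=\hat A_n$. At the inductive step one has $\pi_{n+j}(M)B_{n+j}=\pi_{n+j}(\hat A_n)$; if $\pi_{n+j}(M)\supseteq B_{n+j}$ the induction continues, and otherwise criticality of $(A_{n+j},B_{n+j})$ forces $\pi_{n+j}(M)\not\le A_{n+j}$ yet $\pi_{n+j}(M)\cap A_{n+j}\le B_{n+j}$, whence $[\pi_{n+j}(M),A_{n+j}]\le\pi_{n+j}(M)\cap A_{n+j}\le B_{n+j}$ and so $M\le\hat C_{n+j}\le\hat B_{n+j-1}$; but then $\hat A_n=M\hat B_{n+j}\le\hat B_{n+j-1}$, which is absurd. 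This descent, which is where the hypotheses are really consumed (criticality \emph{at every level} of the tower, not merely the chief-factor property, together with the translated centraliser condition), is the step I expect to be the main obstacle.

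Given this, the first assertion is quick. \emph{Not virtually pronilpotent:} an open pronilpotent subgroup contains an open normal pronilpotent subgroup $H$ (closed subgroups of pronilpotent groups are pronilpotent), so each $\pi_n(H)$ is a nilpotent normal subgroup of $G_n$; by Lemma~\ref{critlem} this forces $\pi_n(H)\le C_{G_n}(A_n/B_n)$ for all but finitely many $n$, i.e. $H\le\bigcap_n\hat C_n=1$, contradicting that $H$ is open in the infinite group $G$. \emph{Just infinite:} for $1\ne N\unlhd G$, pick $n$ (as large as desired) with $N\not\le\hat C_n$, which exists since $\bigcap_n\hat C_n=1$; then $\pi_n(N)\not\le C_{G_n}(A_n/B_n)$, so Lemma~\ref{critlem} gives $\pi_n(N)\ge A_n$. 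Setting $D=N\cap\hat A_n\unlhd G$, the modular law gives $D\hat B_{n+1}=(N\hat B_{n+1})\cap\hat A_n=\hat A_n$, so $\pi_n(D)=A_n$ and $D\not\le\hat B_n$; criticality of $(\hat A_n,\hat B_n)$ in $G$ then yields $D=\hat A_n$, so $N\supseteq\hat A_n$ is open.

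For the last assertion, assume $\conda$ and, for a contradiction, that $G$ is just infinite but not hereditarily just infinite. By the structure theory of just infinite profinite groups, $G$ then has an open normal subgroup of the form $N=V_1\times\cdots\times V_k$ with $k\ge2$ and each $V_i\ne1$, the $V_i$ being permuted transitively by $G$ under conjugation. Choose $n$ in the set supplied by $\conda$ and large enough that (i) $\hat A_n\le N$ and (ii) the subgroups $\pi_n(V_1),\dots,\pi_n(V_k)$ are pairwise distinct; both hold for all large $n$, the second because distinct closed subgroups have distinct images under $\pi_n$ once $n$ is large. Then $U:=\pi_n(V_1)$ has precisely the $k\ge2$ distinct $G_n$-conjugates $\pi_n(V_1),\dots,\pi_n(V_k)$, these centralise one another (as $[V_i,V_j]=1$ for $i\ne j$), and they generate $\pi_n(N)\supseteq\pi_n(\hat A_n)=A_n$; so $\conda$ forces $U\unlhd G_n$, contradicting that $U$ has $k\ge2$ conjugates. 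Hence $G$ is hereditarily just infinite. The only delicacy in this last part is arranging that the direct decomposition $N$ survives faithfully enough in $G_n$; the substantive input is the structure theorem, which pins the failure of the hereditary property to exactly the configuration that $\conda$ excludes.
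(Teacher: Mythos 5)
Your proof is correct, and its skeleton---project into the finite quotients, apply Lemma \ref{critlem} to images, and apply $\conda$ to the images of a family of pairwise-centralising conjugates---is the same as the paper's; but the execution differs genuinely in two places. (1) Your ``crux'', that $(\hat A_n,\hat B_n)$ is a critical pair in $G$ itself, does not appear in the paper and is not needed: for $1\neq T\unlhd G$ the paper notes that $\pi_n(T)\not\leq A_n$ for large $n$, hence $\pi_n(T)\not\leq C_{G_n}(P_n)$ (since $P_nC_{G_n}(P_n)\leq B_n\leq A_n$), hence $\pi_{n+1}(T)$ fails to centralise $A_{n+1}/B_{n+1}$, so Lemma \ref{critlem} gives $\pi_{n+1}(T)\geq A_{n+1}$ \emph{and non-nilpotent}; since $\pi_m^{-1}(A_m)\supseteq\ker\pi_{m-1}$, the chain $T\ker\pi_m$ is then eventually constant and $T\supseteq\pi_n^{-1}(A_n)$ follows directly, with ``not virtually pronilpotent'' coming for free from the non-nilpotence clause. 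Your descent is sound---the one loose phrase is ``criticality forces $\pi_{n+j}(M)\not\leq A_{n+j}$'', where what forces it is criticality \emph{together with} the inductive hypothesis $M\hat B_{n+j}=\hat A_n$, as your own contradiction in effect uses---and it buys a cleaner structural statement and a slicker route to both conclusions, at the cost of being the longest part of the argument. (2) For the hereditary part the paper uses only the statement it cites from \cite{Wil} 2.1: a non-trivial subgroup $U$ of infinite index whose distinct conjugates centralise each other; its normal closure is open, and $\pi_m(U)$ is eventually non-normal because otherwise $U=\bigcap_m U\ker\pi_m$ would be normal of infinite index. You instead invoke the basal/branch decomposition $N=V_1\times\cdots\times V_k$ with $k\geq 2$; this is a correct standard fact \emph{here}, but only because $G$ has already been shown not virtually pronilpotent, hence has no non-trivial virtually abelian normal subgroup---you should say this explicitly, since the direct-product decomposition can fail for virtually abelian just infinite groups, whereas the weaker commuting-conjugates statement holds in general and suffices because $\conda$ never asks the conjugates to generate their direct product.
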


\begin{proof}Let $\pi_n$ be the homomorphism from $G$ to $G_n$ associated with the inverse limit construction.  Let $T$ be a non-trivial normal subgroup of $G$, and let $T_n = \pi_n(T)$.  By construction, the subgroups $\pi^{-1}_n(A_n)$ of $G$ form a descending chain of subgroups with trivial intersection.  Thus for $n$ sufficiently large, $T_n$ is not contained in $A_n$.  It follows that for $n$ sufficiently large, $T_n$ does not centralise $P_n$, in other words $T_{n+1}$ does not centralise the section $A_{n+1}/B_{n+1}$ of $G_{n+1}$, and $(A_{n+1},B_{n+1})$ is critical in $G_{n+1}$.  Lemma \ref{critlem} now implies that $T_{n+1} \geq A_{n+1}$ and $T_{n+1}$ is not nilpotent.  As $T_{n+1} \geq A_{n+1}$ for all $n$ sufficiently large, it follows that $T \geq \pi^{-1}_n(A_n)$ for some $n$, so $T$ is an open non-pronilpotent subgroup of $G$.  Thus $G$ is just infinite and not virtually pronilpotent.

Suppose $G$ is not hereditarily just infinite; then $G$ has a non-trivial subgroup $U$ of infinite index, such that the distinct conjugates of $U$ centralise each other (see e.g. \cite{Wil} 2.1).  Let $L$ be the normal closure of $U$ in $G$.  Since $L$ is a non-trivial normal subgroup of $G$ and hence open, there is some $n$ such that $\pi_m(L)$ contains $A_m$ for all $m \geq n$.  Moreover, the distinct $G_m$-conjugates of $\pi_m(U)$ centralise each other, and for $m$ sufficiently large, $\pi_m(U)$ is not normal in $G_m$ since $U$ is not normal in $G$, contradicting $\conda$.\end{proof}

There is a direct connection here with Theorem \ref{wilthm} that is not proved via the just infinite property, as follows:

\begin{prop}\label{thmatowil}Let $\Lambda$ be an inverse system $\Lambda = (G_n)_{n \geq 0}$ of the form described in Theorem \ref{wilthm}, with $K_n$ as specified.  Suppose in addition that $K_n$ has a non-central minimal normal subgroup for all but finitely many $n$.  Then it is possible to specify $A_n \leq G_n$ so that $\Lambda$ of the form specified in Theorem \ref{thma}, including condition $\conda$.\end{prop}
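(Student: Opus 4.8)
The plan is to build the normal subgroups $A_n$ out of suitably chosen minimal normal subgroups of the $G_n$. There are only finitely many $n$ for which $K_n$ fails to have a non-central minimal normal subgroup; for these, and for $n=0$, I would simply put $A_n:=G_n$, which is harmless because Theorem~\ref{thma} demands its numbered hypotheses only for cofinitely many $n$ and $\conda$ only for infinitely many $n$. For each remaining $n$ I would choose — this is the one nontrivial point, isolated as a Claim below — a minimal normal subgroup $Q_n$ of $G_n$ with $Q_n\leq K_n$ and $[Q_n,K_n]\neq1$, and set $A_{n+1}:=\rho_n^{-1}(Q_n)$, the full preimage under the structure map $\rho_n\colon G_{n+1}\to G_n$ of $\Lambda$. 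In the notation of Theorem~\ref{thma} we then have $B_{n+1}=\ker\rho_n=K_{n+1}$ and, as $\rho_n$ is surjective, $P_n=\rho_n(A_{n+1})=Q_n$. The Claim needed is:

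\emph{Claim.} If $K$ is a normal subgroup of a finite group $G$ and $K$ has a minimal normal subgroup not contained in $Z(K)$, then $G$ has a minimal normal subgroup $Q$ with $Q\leq K$ and $[Q,K]\neq1$.

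Granting the Claim, the verification that $\Lambda$, equipped with these $A_n$, has the form required by Theorem~\ref{thma} (condition $\conda$ included) is short and uses only conditions (i) and (ii) of Theorem~\ref{wilthm}. Fix a surviving index $n$. As $Q_n\neq1$ we get $K_{n+1}<A_{n+1}$, and $A_{n+1}/K_{n+1}\cong Q_n$ is a minimal normal subgroup of $G_{n+1}/K_{n+1}\cong G_n$; hence for $L\unlhd G_{n+1}$ with $L<A_{n+1}$, condition (i) makes $L$ comparable with $K_{n+1}$, and in the case $K_{n+1}\leq L$ minimality of $A_{n+1}/K_{n+1}$ forces $L=K_{n+1}$, so in either case $L\leq B_{n+1}$: thus $(A_{n+1},B_{n+1})$ is a critical pair in $G_{n+1}$. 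Moreover $P_n=Q_n\leq K_n=B_n$ by the choice of $Q_n$, and since $[Q_n,K_n]\neq1$ the normal subgroup $C_{G_n}(Q_n)$ of $G_n$ does not contain $K_n$, so condition (i) applied in $G_n$ gives $C_{G_n}(Q_n)\leq K_n$; hence $P_nC_{G_n}(P_n)\leq K_n=B_n$, and the hypotheses of the first part of Theorem~\ref{thma} all hold. For $\conda$: suppose $U\leq G_n$ has distinct $G_n$-conjugates centralising each other and $L:=\langle U^{G_n}\rangle\supseteq A_n$. Then $L\unlhd G_n$ and $L\supseteq A_n>K_n$, so $L\not\leq K_n$ and condition (ii) applies to $L$; but $U$ is a subgroup of $L$ whose distinct $G_n$-conjugates centralise each other and which generates $L$, so condition (ii) forbids $U$ from being a proper subgroup of $L$, forcing $U=L\unlhd G_n$. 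Thus $\conda$ holds, and Theorem~\ref{thma} yields that $G$ is hereditarily just infinite — all that remains beyond the construction.

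It therefore only remains to prove the Claim, and this is where I expect the real work to lie. When $K$ has a non-abelian minimal normal subgroup $M$ the argument is direct: the simple direct factors of $M$ are components of $K$, so the normal closure $\langle M^G\rangle$ is a direct product of pairwise-commuting non-abelian simple groups which $G$ permutes, $\langle M^G\rangle\leq K$, and the product $Q$ of the members of one $G$-orbit of these simple factors is a minimal normal subgroup of $G$ inside $K$; being non-abelian, $Q\not\leq Z(K)$, so $[Q,K]\neq1$. So one may assume the given minimal normal subgroup $M$ of $K$ with $M\not\leq Z(K)$ is abelian, whence $[M,K]=M$ by minimality of $M$ in $K$. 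Here I would induct on $|G|$. Put $R:=\langle M^G\rangle\unlhd G$; then $R\leq K$, and since $[M^g,K]=[M,K]^g=M^g$ for each $g\in G$ we have $[R,K]=R$. Suppose, for contradiction, that no minimal normal subgroup of $G$ inside $K$ escapes being centralised by $K$; then $S:=\mathrm{soc}_G(K)$ (the product of the minimal normal subgroups of $G$ contained in $K$) is an abelian normal subgroup of $G$ lying in $Z(K)$, with $1\neq S<R$ (strictly, as $[R,K]=R\neq1$). Passing to $G/S$, one checks that $MS/S\cong M$ is again a minimal normal subgroup of $K/S$ not contained in $Z(K/S)$ — here $M\cap S=1$ because $M$ is an irreducible $K$-module and $M\leq S$ would force $[M,K]=1$ — so the induction hypothesis supplies a minimal normal subgroup of $G/S$ inside $K/S$ not centralised by $K/S$. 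Pulling this back to $G$, and using once more that the $G$-socle of any normal subgroup of $G$ inside $K$ lies in $Z(K)$ (which pins down the $G$-chief factors above $S$ and locates a suitable $Q$), contradicts the assumption. The delicate point, which I anticipate being the main obstacle, is exactly this pull-back step: it amounts to ruling out that $K$ acts on $R$ with $[R,K]=R$ while centralising every minimal normal subgroup of $G$ that lies inside $K$.
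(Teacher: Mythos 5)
Your construction is, up to a change of viewpoint, the same as the paper's: the paper works inside the inverse limit $G$ with the kernels $S_n = \ker(G \rightarrow G_{n-1})$, chooses $R_n$ so that $R_n/S_n$ is a minimal normal subgroup of $G/S_n$ contained in, but not centralised by, $S_{n-1}/S_n \cong K_{n-1}$, and sets $A_n = R_n/S_{n+1}$ --- which is exactly your $A_{n+1} = \rho_n^{-1}(Q_n)$. Your verifications that $(A_{n+1},B_{n+1})$ is critical, that $P_nC_{G_n}(P_n)\leq B_n$, and that $\conda$ follows from condition (ii) are all correct, and in fact slightly more economical than the paper's, since you only establish criticality in each $G_n$ rather than in $G$ itself, which is all Theorem \ref{thma} asks for. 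You have also correctly isolated the one point of genuine group-theoretic substance: your Claim is precisely the contrapositive of the assertion `Then every minimal normal subgroup of $S_{n-1}/S_n \cong K_{n-1}$ is central', which the paper's proof states without justification.

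Your proof of the Claim is, however, genuinely incomplete in the abelian case, as you yourself flag: the induction hypothesis applied to $G/S$ produces a minimal normal subgroup of $G/S$ inside $K/S$ not centralised by $K/S$, but its preimage is a normal subgroup of $G$ strictly containing $S$, not a minimal normal subgroup of $G$, so it does not contradict the hypothesis that every minimal normal subgroup of $G$ inside $K$ is central in $K$; the induction does not close. The gap can be filled by a direct argument that makes the induction unnecessary. With $M$ abelian of exponent $p$ and $[M,K]=M$, the distinct $G$-conjugates $M^g$ are minimal normal subgroups of $K$, so any two distinct ones intersect trivially and hence commute; thus $N=\langle M^G\rangle$ is elementary abelian, i.e.\ an $\bF_p G$-module, and as an $\bF_p K$-module it is a sum of the irreducible submodules $M^g$, each satisfying $[M^g,K]=M^g$. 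A semisimple module none of whose irreducible summands is trivial has no non-zero fixed points, so $C_N(K)=1$. But $N$ is a non-trivial normal subgroup of $G$ contained in $K$, hence contains a minimal normal subgroup $Q$ of $G$ with $Q\leq K$, and your contradiction hypothesis forces $Q\leq C_N(K)=1$, which is absurd. (The same socle argument disposes of the non-abelian case at once, since there $Z(N)=1$.) With that paragraph inserted in place of the inductive attempt, your argument is complete.
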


\begin{proof}Let $S_n = \ker(G \rightarrow G_{n-1})$ for $n \geq 1$.  Condition (i) of Theorem \ref{wilthm} can be reinterpreted as follows: Given any normal subgroup $L$ of $G$, either $LS_{n+1}$ properly contains $S_n$, or else $S_n$ contains $L$.

Let $R$ be such that $R/S_n$ is minimal normal in $G/S_n$, and let $L$ be a normal subgroup of $G$ that is properly contained in $R$.  Suppose $L$ is not contained in $S_n$.  Then $LS_n = R$, and there is some $i\geq 0$ such that $LS_{n+i} = R$ but $LS_{n+i+1} < R$.  In particular, $LS_{n+i+1}$ does not contain $S_{n+i}$.  But then $S_{n+i}$ contains $L$, so $S_{n+i}=R$, a contradiction.  Hence $(R,S_n)$ is a critical pair in $G$.

For some $n$, suppose for all $R$ as above that $C_G(R/S_n)$ contains $S_{n-1}$.  Then every minimal normal subgroup of $S_{n-1}/S_n \cong K_{n-1}$ is central.  By assumption, this can only occur for finitely many $n$.  Thus there is some $R_n > S_n$ such that $(R_n,S_n)$ is critical in $G$ and such that, for $n$ sufficiently large, $C_G(R_n/S_n)$ does not contain $S_{n-1}$.  In this case $C_G(R_n/S_n)$ is contained in $S_{n-1}$.

Now set $A_n = R_n/S_{n+1}$, set $P_n = R_{n+1}/S_{n+1}$, and set $B_n = S_n/S_{n+1} = K_n$.  It is clear that $B_n$ and $P_n$ arise in the specified way from the series $A_n$ and the kernels of the maps $G_n \rightarrow G_{n-1}$, that $(A_n,B_n)$ is a critical pair in $G_n$ and that $P_nC_{G_n}(P_n) \leq B_n$ for all but finitely many $n$.  Moreover, by condition (ii) of Theorem \ref{wilthm}, if $L$ is a normal subgroup of $G_n$ properly containing $B_n=K_n$, then $L$ is not the product of a conjugacy class of non-normal subgroups of $G_n$ that centralise each other.  This demonstrates condition $\conda$, since $A_n > B_n$.\end{proof}

It remains to show that any (hereditarily) just infinite profinite group $G$ has an inverse system of the form given in Theorem \ref{thma}.  Moreover, given information about the composition factors of $G$, it is possible to impose some related conditions over the isomorphism types of simple groups that appear in the `critical chief factors' $A_n/B_n$, although this is complicated somewhat by the existence of perfect central extensions of finite simple groups.  (Recall here the well-known theorem of Schur that a perfect finite group $G$ has a universal perfect central extension $Q$, and the centre of $Q$ is finite; in this case $Z(Q)$ is the Schur multiplier of $G$.  To see why this is relevant to the present situation, consider the abelian chief factors of a descending iterated wreath product in which the wreathing groups are proper perfect central extensions of finite simple groups.)

\begin{defn}Let $G$ be a profinite group and let $\mcC$ be a class of finite simple groups (possibly including cyclic groups of prime order).  Define the following condition:

$\condb$ For every integer $n$, there is a finite image $G/N$ of $G$ and a composition series for $G/N$ in which at least $n$ of the factors are in $\mcC$.  For every prime $p$, if $\mcC$ contains the cyclic group of order $p$, then $\mcC$ also contains all non-abelian finite simple groups whose Schur multipliers have order divisible by $p$.\end{defn}

\begin{defn}A finite group $G$ is a \emph{central product} of subgroups $\{H_i \mid i \in I\}$ if these subgroups generate $G$, and whenever $i \not=j$ then $[H_i,H_j]=1$.  Say $G$ is \emph{centrally decomposable} if it is a central product of proper subgroups.\end{defn}

\begin{thm}\label{thmb}Let $G$ be a just infinite profinite group that is not virtually pro-$p$.  Let $\mcC$ be a class of finite simple groups such that $\condb$ holds.  Then $G$ is the limit of an inverse system of the form $\Lambda$ described below.

Let $\Lambda = (G_n)_{n\geq0}$ be an inverse system of finite groups, where $G_n$ contains a specified normal subgroup $A_n$, with associated surjective homomorphisms $\rho_n: G_{n+1} \rightarrow G_n$.  Write $P_n = \rho_n(A_{n+1})$, and write $B_{n+1} = \ker\rho_n$.  The pair $(A_n,B_n)$ is the image of a critical pair in $G$, so in particular is critical in $G_n$, and $A_n/B_n$ is a direct power of a group in $\mcC$.  Furthermore $P_nC_{G_n}(P_n) \leq B_n$.

If $G$ is hereditarily just infinite, one may additionally arrange that every subgroup of $G_n$ normalised by $A_n$ either contains $P_nC_{G_n}(P_n)$, or is contained in every maximal normal subgroup of $A_n$ (or both); in this case, no normal subgroup of $G_n$ containing $A_n$ is centrally decomposable.\end{thm}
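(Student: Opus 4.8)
Here is the plan.

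\medskip

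I will realise $\Lambda$ directly inside $G$. First fix a strictly descending chain $G=V_0\ge V_1\ge V_2\ge\cdots$ of open normal subgroups with $\bigcap_i V_i=1$; such a chain exists because a just infinite profinite group is second countable — any strictly descending chain of open normal subgroups has trivial intersection, since that intersection is a closed normal subgroup and every nontrivial closed normal subgroup of $G$ is open. The kernels $N_n=\ker(G\to G_n)$ of the system will be extracted from a finer chain of normal subgroups of $G$,
$$\hat A_1>N_0\ge\hat A_2>N_1\ge\hat A_3>N_2\ge\cdots,\qquad\bigcap_n N_n=1,$$
in which, for each $n\ge 0$, $(\hat A_{n+1},N_n)$ is a critical pair of $G$ with $\hat A_{n+1}/N_n$ a direct power of a group in $\mcC$, and in which $C_G(\hat A_{n+1}/N_n)\le N_{n-1}$ for $n\ge1$. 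Setting $G_n=G/N_n$ and taking $A_n,B_n,P_n$ to be the images in $G_n$ of $\hat A_n,N_{n-1},\hat A_{n+1}$ respectively, every requirement of the theorem falls out: $(A_n,B_n)$ is the image of the critical pair $(\hat A_n,N_{n-1})$, so is critical in $G_n$ since $N_n\le N_{n-1}$; $A_n/B_n\cong\hat A_n/N_{n-1}$ is a direct power of a group in $\mcC$; $P_n\le B_n$ because $\hat A_{n+1}\le N_{n-1}$; and $C_{G_n}(P_n)\le B_n$ because $C_G(\hat A_{n+1}/N_n)\le N_{n-1}$. One further preliminary: since $G$ is just infinite but not virtually pro-$p$ it is not virtually pronilpotent (noted after Theorem \ref{wilthm}), so its pronilpotent radical is not open and hence is trivial; consequently the pronilpotent radical of every open normal subgroup of $G$ is trivial too, being characteristic in it and thus normal in $G$, and in particular no open normal subgroup of $G$ has a nontrivial abelian normal subgroup.

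\medskip

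The work lies in the following construction step: for every open normal subgroup $M\ne1$ of $G$ there is a critical pair $(\hat A,\hat B)$ of $G$ with $1<\hat B<\hat A\le M$, such that $\hat A/\hat B$ is a direct power of a group in $\mcC$ and $C_G(\hat A/\hat B)\le M$. Granting this, the finer chain is built recursively: given $N_{n-1}$, apply the step inside $M=V_n\cap N_{n-1}$ to get $(\hat A,\hat B)$, and put $\hat A_{n+1}=\hat A$, $N_n=\hat B$; then $N_n\le V_n$ forces $\bigcap_n N_n=1$, and $C_G(\hat A_{n+1}/N_n)\le V_n\cap N_{n-1}\le N_{n-1}$ is the centraliser condition, the remaining conditions being immediate. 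In the step itself, the $\mcC$-power requirement is where $\condb$ enters: as $[G:M]$ is finite, $\condb$ forces $M$ to have arbitrarily many composition factors in $\mcC$, so $M$ contains a $G$-chief factor that is a direct power of some $S\in\mcC$, and from it one must produce a critical pair of $G$ whose chief factor is still a direct power of a group in $\mcC$. The delicate point — and the reason $\condb$'s clause on Schur multipliers is indispensable — is that when $S$ is the cyclic group of order $p$ this chief factor may be (a section of) the centre of a perfect central extension of a nonabelian simple group $T$ and is useless on its own; the clause guarantees $T\in\mcC$ whenever $p$ divides the order of the Schur multiplier of $T$, so one may instead take the critical chief factor to be a power of $T$ (this is exactly the phenomenon flagged by the iterated wreath product mentioned just before condition $\condb$, and implicit in the proof of Proposition \ref{thmatowil}). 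The centraliser bound $C_G(\hat A/\hat B)\le M$ is then arranged by choosing the critical pair judiciously: triviality of the pronilpotent radical of $M$ prevents $\hat A/\hat B$ from being centralised by $M$, and — using that $C_G(\hat A/\hat B)\ge\hat B$ always, together with the device (as in the proof of Proposition \ref{thmatowil}) of replacing $(\hat A,\hat B)$ by $(\hat A C,\,C)$, where $C=C_G(\hat A/\hat B)$, and re-extracting a critical pair — one drives the centraliser of the final critical chief factor inside $M$.

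\medskip

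For the last assertion, suppose $G$ is hereditarily just infinite. Then in the construction step one additionally demands that every subgroup $U$ of $G_n$ normalised by $A_n$ either contains $P_nC_{G_n}(P_n)$ or lies in every maximal normal subgroup of $A_n$. If this demand failed for cofinally many $n$, then for each such $n$ there would be a subgroup $U_n\le G_n$ normalised by $A_n$ with neither property; since modulo $N_n$ the distinct conjugates of $U_n$ centralise one another and $\bigcap_n N_n=1$, lifting the $U_n$ through the inverse system would yield a nontrivial subgroup of $G$ of infinite index whose distinct conjugates centralise each other, contradicting hereditary just infiniteness via \cite{Wil} 2.1; so the demand can be met. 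It then forbids any normal subgroup $R$ of $G_n$ with $R\ge A_n$ from being centrally decomposable: a central decomposition of $R$ into proper subgroups, grouped into $A_n$-orbits, gives an $A_n$-invariant central product $R=K_1\cdots K_s$ with $s\ge2$; as $P_n$ lies in some $K_j$ the relations $[K_i,K_j]=1$ show that if every $K_i$ contained $P_nC_{G_n}(P_n)$ then every $K_i$, hence $R$, would centralise $P_n$ and so lie in $B_n\subsetneq A_n$, which is absurd; choosing $K_1$ not containing $P_nC_{G_n}(P_n)$, its complementary factor $A_n\cap\langle K_j:j\neq1\rangle$ is an $A_n$-normalised subgroup mapping onto $A_n/B_n$, so it is not contained in every maximal normal subgroup of $A_n$ and does not contain $P_nC_{G_n}(P_n)$ either — contradicting the demand.

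\medskip

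The main obstacle is the construction step, and within it the need to satisfy its two competing halves at once: producing an $\mcC$-powered critical chief factor arbitrarily deep in $G$ — where one must convert the abundance of $\mcC$-composition factors, via the structure of just infinite groups and the Schur-multiplier exchange built into $\condb$, into an actual critical pair — while simultaneously confining the centraliser of that chief factor to $M$, which forces one to exploit the triviality of the pronilpotent radical of every open normal subgroup and to check that criticality is not lost when one replaces a critical pair by the one whose bottom term is the full centraliser (i.e. that no normal subgroup of $G$ slips in strictly between $\hat B$ and $C_G(\hat A/\hat B)$).
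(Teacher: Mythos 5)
The overall architecture of your plan matches the paper's: realise the inverse system via a descending chain of critical pairs in $G$ whose top terms have trivial intersection and whose centralisers are driven into prescribed open subgroups. But the heart of the proof --- your ``construction step'', which is exactly the paper's Lemma \ref{seclem} --- is not actually carried out, and the sketch you give of it contains steps that fail. First, triviality of the pronilpotent radical of $M$ does \emph{not} prevent a chief factor $\hat A/\hat B$ with $\hat A\le M$ from being centralised by $M$: that hypothesis only yields $\hat A/\hat B\le Z(M/\hat B)$, which is precisely the perfect-central-extension phenomenon the paper flags and is compatible with $M$ itself having trivial centre and trivial pronilpotent radical. Second, your device of replacing $(\hat A,\hat B)$ by $(\hat AC,C)$ with $C=C_G(\hat A/\hat B)$ degenerates in exactly the problematic case: if $\hat A/\hat B$ is abelian then $\hat A\le C$, so $\hat AC=C$ and no pair remains. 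The paper's actual argument is global rather than pair-by-pair: assuming all but finitely many relevant centralisers escape the target subgroup, it intersects them to obtain an open $M$ (via Theorem \ref{genob}), takes the smallest $N\unlhd M$ with all composition factors of $M/N$ in $\mcC$ (open because $G$ is not virtually pronilpotent), locates a $\mcC$-pair $(A,B)$ with $A\le N$, which is then forced to be central of exponent $p$ in $N/B$, and invokes Lemma \ref{opsch} --- where the Schur-multiplier clause of $\condb$ does its real work --- to get $E^p(N)<N$ and contradict the minimality of $N$. None of this is reconstructed in your proposal, and your closing paragraph concedes the step is open.

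The hereditarily just infinite clause is also handled incorrectly. You try to obtain it by contradiction via \cite{Wil} 2.1 by ``lifting the $U_n$'', but a subgroup $U_n$ witnessing failure of the condition is merely normalised by $A_n$: nothing forces its distinct conjugates to centralise one another, so \cite{Wil} 2.1 simply does not apply, and the lifting itself would need a separate compactness argument. The paper instead arranges the condition constructively: it lets $I$ be the intersection of all subgroups normalised by $R_n$ that avoid some maximal open normal subgroup of $R_n$, proves $I$ is open using Theorem \ref{genob} together with Lemma \ref{melfin}, and then places the next critical pair and its centraliser inside $I$, so that any $A_n$-normalised subgroup not lying in every maximal normal subgroup of $A_n$ automatically contains $I/S_{n+1}\ge P_nC_{G_n}(P_n)$. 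Finally, your derivation of central indecomposability assumes without justification that $P_n$ lies in one of the central factors $K_j$; the correct route is Lemma \ref{centdec}, applied with $K=P_n$ non-central in $A_n$, which needs no such containment.
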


The main idea in the proof of Theorem \ref{thmb} is (generalised) obliquity in the sense of \cite{Rei}; this will be used to obtain suitable critical pairs.

\begin{thm}[see \cite{Rei} Theorem A]\label{genob} Let $G$ be a just infinite profinite group, and let $H$ be an open subgroup of $G$.  Then $H$ contains all but finitely many of the normal subgroups of $G$.\end{thm}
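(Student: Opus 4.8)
The plan is to fix the open subgroup $H$, reduce to the case that $H$ is normal, and then show that the collection $\mcS = \{N \unlhd G : N \not\le H\}$ is finite.

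First I would dispose of the normality reduction. Replacing $H$ by its normal core $H_G = \bigcap_{g\in G}H^g$, which is again open, loses nothing: a normal subgroup $N$ satisfies $N \le H$ if and only if $N = N^g \le H^g$ for all $g$, i.e. $N \le H_G$. So I assume $H \unlhd G$ and write $Q = G/H$, a finite group. Every $N \in \mcS$ is non-trivial, hence open (as $G$ is just infinite), and $\mcS$ is upward closed in the lattice of normal subgroups. The key structural step is that $\mcS$ satisfies the descending chain condition. Indeed, suppose $N_1 \supsetneq N_2 \supsetneq \cdots$ is a strictly descending chain in $\mcS$. Its intersection $D = \bigcap_k N_k$ is normal; were $D$ non-trivial it would be open, so only finitely many subgroups of $G$ would contain $D$ and the chain could not be infinite. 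Hence $D = 1$. But a descending chain of open subgroups with trivial intersection is a base of neighbourhoods of the identity: for any open $U \ni 1$, the compact sets $N_k \cap (G\setminus U)$ form a descending family with empty intersection, so some $N_k \subseteq U$. Taking $U = H$ gives some $N_k \le H$, contradicting $N_k \in \mcS$. Thus $\mcS$ has no infinite descending chain, so every member of $\mcS$ lies above a minimal member, and it suffices to prove that $\mcS$ has only finitely many minimal elements.

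Next I would analyse the minimal elements via critical pairs. If $N$ is minimal in $\mcS$, then every normal subgroup of $G$ properly contained in $N$ lies in $H$, hence in $N \cap H$; since $N \cap H \unlhd G$, the pair $(N, N\cap H)$ is critical in $G$, with non-trivial chief factor $N/(N\cap H) \cong NH/H \le Q$. Distinct minimal elements are incomparable, so for minimal $N \ne N'$ we have $N \cap N' \subsetneq N$, whence $N \cap N' \le N\cap H$ by criticality. Applying the evident profinite analogue of Lemma \ref{critlem} to the critical pair $(N, N\cap H)$ and the normal subgroup $N'$, which does not contain $N$, shows that $N'$ centralises $N/(N\cap H)$; symmetrically $N$ centralises $N'/(N'\cap H)$. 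Hence $[N,N'] \le H$ for all distinct minimal $N, N'$, so the minimal elements pairwise commute modulo $H$.

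The main obstacle is to turn this pairwise-commuting configuration into a contradiction when there are infinitely many minimal elements. The route I would take is to let $L$ be the closure of the subgroup generated by all minimal elements and put $W = L\cap H$, so that $L/W$ is a finite group generated by the pairwise-commuting images of the minimal elements. As $L/W$ is finite it has only finitely many subgroups, so infinitely many minimal elements $N_i$ must share a common product $N_iW = \tilde N$; these are then infinitely many pairwise-incomparable normal subgroups of $G$, each surjecting onto the fixed chief factor $\tilde N/W$, with $\bigcap_i N_i = 1$. The remaining and genuinely hard point is to derive a contradiction from this last configuration, which I expect to do by exhibiting a non-trivial normal subgroup of infinite index (for instance inside an intersection of the centralisers $C_G(N_i/(N_i\cap H))$), or, equivalently, a proper subgroup whose distinct conjugates centralise one another and generate a subgroup of infinite index; either of these is incompatible with $G$ being just infinite. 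This finiteness estimate is precisely the content isolated as generalised obliquity in \cite{Rei}, and it is the step I expect to require the most care.
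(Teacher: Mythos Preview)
The paper does not prove this statement at all; Theorem~\ref{genob} is quoted from \cite{Rei} and used as a black box, so there is no proof here to compare your attempt against.

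As for the attempt itself: your reductions are correct and useful. Passing to the normal core, the descending-chain argument showing every element of $\mcS$ lies above a minimal one, the observation that a minimal $N\in\mcS$ gives a critical pair $(N,N\cap H)$ with $NH/H$ minimal normal in $G/H$, and the use of Lemma~\ref{critlem} to obtain $[N,N']\le H$ for distinct minimal $N,N'$ are all sound. The pigeonhole step producing infinitely many incomparable minimal $N_i$ with a common image $\tilde N/W$ and $\bigcap_i N_i=1$ is also fine.

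But the proposal is not a proof, and you say so yourself: you stop exactly where the substance begins and identify the missing step as ``precisely the content isolated as generalised obliquity in \cite{Rei}''. Since Theorem~\ref{genob} \emph{is} the generalised obliquity theorem of \cite{Rei}, this is circular. One observation that sharpens where the difficulty lies: in your configuration the centraliser $C=C_G(\tilde N/W)$ equals $C_G\bigl(N_i/(N_i\cap H)\bigr)$ for every $i$, and you have shown $N_j\le C$ for all $j\ne i$; taking two different values of $i$ forces $N_i\le C$ for all $i$, so $\tilde N/W$ must be abelian. Thus the non-abelian case falls out immediately from what you have, and the genuine gap is the case where the common chief factor is elementary abelian of exponent $p$. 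That case needs an independent argument, not a reference back to the theorem you are trying to prove.
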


\begin{lem}\label{minlem}Let $G$ be a just infinite profinite group, let $K$ and $L$ be normal subgroups such that $L < K$, and suppose that $K/L$ is a chief factor of $G$.  Then there is a critical pair $(A,A \cap L)$ in $G$ such that $AL=K$.  Note in particular that $K/L \cong A/(A \cap L)$ and $C_G(K/L) = C_G(A/(A\cap L))$.\end{lem}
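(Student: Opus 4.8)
The plan is to produce the subgroup $A$ by a minimality argument. Consider the (finite, by Theorem \ref{genob}) collection of normal subgroups $N$ of $G$ that satisfy $NL = K$ but $N \not\le L$; this collection is non-empty since $K$ itself is such an $N$. Choose $A$ minimal in this collection with respect to inclusion. I claim $(A, A\cap L)$ is the required critical pair. First, $AL = K$ holds by construction, and since $A\cap L$ is normal in $G$ with $A/(A\cap L) \cong AL/L = K/L$, the second modular-law isomorphism gives the stated identifications of the factors and their centralisers. Since $A\not\le L$ we have $A\cap L < A$, so $(A, A\cap L)$ is genuinely a pair of the required shape.

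It remains to check criticality: every normal subgroup $M$ of $G$ with $M < A$ satisfies $M \le A\cap L$, equivalently $M \le L$. Suppose not; then $M \not\le L$, so $ML$ is a normal subgroup of $G$ lying strictly between $L$ and $K$ (it is strictly above $L$ because $M\not\le L$, and it is contained in $AL = K$). But $K/L$ is a chief factor of $G$, so no normal subgroup of $G$ lies strictly between $L$ and $K$; hence $ML = K$. Now $M$ is a normal subgroup of $G$ with $ML = K$ and $M\not\le L$, so $M$ belongs to the collection over which $A$ was chosen minimal, and $M < A$ contradicts minimality of $A$. Therefore $M \le L$, and $(A, A\cap L)$ is critical in $G$.

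The only genuinely non-trivial input is Theorem \ref{genob}, which guarantees the collection of candidate normal subgroups is finite so that a minimal element exists; everything else is an application of the modular law and the definition of chief factor, so I expect no real obstacle. (Alternatively, if one prefers to avoid the finiteness issue entirely, one can run the argument in a suitable finite quotient: pass to $\bar G = G/\operatorname{Core}$, where the core is the intersection of finitely many normal subgroups supplied by Theorem \ref{genob}, pick a minimal complement to $\bar L$ inside $\bar K$ there, and pull back — but the direct argument above is cleaner.)
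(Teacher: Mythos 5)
Your proposal is correct and follows essentially the same route as the paper: both invoke Theorem \ref{genob} to make the relevant family of normal subgroups finite, take a minimal element $A$, and deduce criticality of $(A, A\cap L)$ from that minimality (your set of $N$ with $NL=K$ coincides with the paper's set of normal subgroups contained in $K$ but not in $L$, by the chief-factor property). No substantive difference.
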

\begin{proof}By Theorem \ref{genob}, the collection $\mcK$ of normal subgroups of $G$ not contained in $L$ is finite.  As $K$ is an element of $\mcK$, there is a minimal element $A$ of $\mcK$ contained in $K$.  It follows that $AL > A$ and $AL \leq K$, so $AL=K$.  Since $A$ is minimal in $\mcK$, any normal subgroup of $G$ properly contained in $A$ must be contained in $L$, and hence in $A \cap L$.  Thus $(A,A \cap L)$ is a critical pair.\end{proof}

\begin{defn}Given a profinite group $G$ and a prime $p$, write $E^p(G)$ for the intersection of all normal subgroups of $G$ of index $p$.\end{defn}

\begin{lem}\label{opsch}Let $G$ be a finite group and let $p$ be a prime.  Suppose that $G$ has a chief factor of exponent $p$, that all chief factors of $G$ of exponent $p$ are central, and that $p$ does not divide the order of the Schur multiplier of any non-abelian composition factor of $G$.  Then $E^p(G) < G$.\end{lem}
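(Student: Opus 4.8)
\emph{Proof proposal.}
The plan is to induct on $|G|$ and prove the equivalent assertion that $G$ has a normal subgroup of index $p$ (so that $E^p(G) < G$). By hypothesis $G$ has a chief factor $A/B$ of exponent $p$; since it is central, $G$ acts trivially on it, so every subgroup of $A/B$ is normal in $G$, and minimality of the chief factor forces $|A/B| = p$. If $B \neq 1$, I pass to $G/B$: it still has $A/B$ as a chief factor of exponent $p$, its chief factors of exponent $p$ are chief factors of $G$ and hence central, and its composition factors lie among those of $G$; so by induction $E^p(G/B) < G/B$, and the preimage in $G$ of a normal subgroup of index $p$ of $G/B$ shows $E^p(G) < G$. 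Hence we may assume $B = 1$, so that $G$ has a central minimal normal subgroup $Z$ of order $p$.

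Now split into cases. If $Z \not\leq [G,G]$ then $Z \cap [G,G] = 1$, so $p$ divides $|G/[G,G]|$ and $G$ has a normal subgroup of index $p$. So assume $Z \leq Z(G) \cap [G,G]$. If $G/Z$ has a chief factor of exponent $p$, then $G/Z$ inherits all three hypotheses (again its chief factors and composition factors come from those of $G$), so induction gives $E^p(G/Z) < G/Z$ and hence $E^p(G) < G$. Thus we may also assume $G/Z$ has no chief factor of exponent $p$; equivalently, every abelian chief factor of $G/Z$ has order prime to $p$.

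Under these assumptions $1 \to Z \to G \to G/Z \to 1$ is a stem extension (a central extension with $Z \leq [G,G]$), so by the standard theory of stem extensions the Schur multiplier $H_2(G/Z,\bZ)$ of $G/Z$ surjects onto $Z$; in particular $p$ divides $|H_2(G/Z,\bZ)|$. To obtain a contradiction I would invoke the following auxiliary fact, applied with $H = G/Z$: \emph{if $H$ is a finite group such that every abelian chief factor of $H$ has order prime to $p$, and $p \nmid |H_2(S,\bZ)|$ for every nonabelian composition factor $S$ of $H$, then $p \nmid |H_2(H,\bZ)|$.} Since the composition factors of $G/Z$ are composition factors of $G$, the hypotheses of this fact hold for $G/Z$, so $p \nmid |H_2(G/Z,\bZ)|$, contradicting the previous sentence. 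This rules out the two remaining cases and completes the induction.

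It remains to prove the auxiliary fact, which I would also do by induction on $|H|$, using a minimal normal subgroup $N$ of $H$. The Lyndon--Hochschild--Serre spectral sequence for $N \unlhd H$ shows that $H_2(H,\bZ)$ has a filtration whose successive quotients are subquotients of $H_2(N,\bZ)_{H/N}$, of $H_1(H/N,\, N/[N,N])$, and of $H_2(H/N,\bZ)$; hence $|H_2(H,\bZ)|$ divides $|H_2(N,\bZ)| \cdot |H_1(H/N,\, N/[N,N])| \cdot |H_2(H/N,\bZ)|$. If $N$ is abelian, then $N$ is elementary abelian of order a power of some prime $q \neq p$ (by the hypothesis on the abelian chief factor $N$), the first two factors above are $q$-groups, and the third is prime to $p$ by the inductive hypothesis applied to $H/N$ (which inherits both hypotheses, its chief factors and composition factors coming from those of $H$). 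If $N \cong S^k$ with $S$ nonabelian simple, then $H_2(N,\bZ) \cong H_2(S,\bZ)^k$ is prime to $p$ by hypothesis, the middle factor is trivial since $N = [N,N]$, and the third is prime to $p$ by induction; either way $|H_2(H,\bZ)|$ is prime to $p$. I expect this multiplier estimate to be the main obstacle: the delicate point is pinning down the three terms produced by the spectral sequence and controlling $H_1(H/N,\, N/[N,N])$, which is precisely why it is worth first reducing to the case where $G/Z$ has no chief factor of exponent $p$, so that the abelian minimal normal subgroups one meets are $p'$-groups.
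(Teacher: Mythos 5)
Your proof is correct, but it takes a genuinely different route from the paper's. The paper argues by a maximal-counterexample device inside $G$ itself: it chooses a normal subgroup $N$ of largest order with $E^p(N) < N$, supposes $N < G$, and lifts through a minimal normal subgroup $K/N$ of $G/N$. The centrality hypothesis makes $N/E^p(N)$ a central $p$-section of $K/E^p(N)$, so when $K/N$ is abelian the group $K/E^p(N)$ is nilpotent of order divisible by $p$, and when $K/N \cong S^k$ the Schur multiplier hypothesis prevents $K/E^p(N)$ from being a perfect central extension of $S^k$ by a non-trivial $p$-group; either way $E^p(K) < K$, contradicting the maximality of $N$. This is entirely group-theoretic and invokes the Schur multiplier only through the classical description of perfect central extensions. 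You instead induct on $|G|$ through quotients, reduce to a stem extension $1 \to Z \to G \to G/Z \to 1$ with $|Z| = p$ and $G/Z$ having no exponent-$p$ chief factor, and derive a contradiction by showing $p \nmid |H_2(G/Z,\bZ)|$ via the Lyndon--Hochschild--Serre spectral sequence, against the surjection $H_2(G/Z,\bZ) \twoheadrightarrow Z$ supplied by the stem extension. Your preliminary reductions are all sound, and your auxiliary fact is true with the proof you sketch: the three ingredients (that $H_n(Q,M)$ is annihilated by the exponent of $M$, that $H_2$ of an elementary abelian $q$-group is a $q$-group, and that $H_2(S^k) \cong H_2(S)^k$ for $S$ perfect) are all standard, and the fact itself is a clean, reusable statement about the $p$-part of the Schur multiplier. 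The trade-off is that your argument needs the five-term exact sequence, stem extension theory and the spectral sequence, whereas the paper's proof is shorter, self-contained, and stays entirely within elementary finite group theory.
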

\begin{proof}Let $N$ be a normal subgroup of largest order such that $E^p(N) < N$.  Such an $N$ exists by the fact that $G$ has a chief factor of exponent $p$.  Suppose $N < G$, and let $K/N$ be a minimal normal subgroup of $G/N$.  If $K/N$ is abelian, then $[K,K] \leq N$ and $[N,K] \leq E^p(N)$, so $K/E^p(N)$ is nilpotent.  On the other hand, if $K/N$ is non-abelian, then it is a direct power of a non-abelian finite simple group $S$, such that the Schur multiplier of $S$ has order coprime to $p$.  It follows that $K/[K,K]E^p(N)$ is a non-trivial $p$-group.  In either case $E^p(K) < K$, contradicting the choice of $N$.\end{proof}

\begin{lem}\label{seclem}Let $G$ be a just infinite profinite group that is not virtually pronilpotent.  Let $\mcC$ be a class of finite simple groups such that $\condb$ holds.  Let $H$ be an open subgroup of $G$.  Let $\mcD$ be the set of critical pairs $(A,B)$ in $G$ such that $A/B$ is a direct power of a group in $\mcC$.  Then there are infinitely many pairs $(A,B) \in \mcD$ such that $AC_G(A/B)$ is contained in $H$.\end{lem}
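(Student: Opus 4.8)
The plan is to move freely between chief factors and critical pairs via Lemma~\ref{minlem}, to reduce the infinitude assertion to a single-instance claim by a routine iteration, and to prove that claim by combining obliquity (Theorem~\ref{genob}) with Lemma~\ref{opsch}. Note first that if $K/L$ is a chief factor of $G$ and $(A, A \cap L)$ is the critical pair produced by Lemma~\ref{minlem} (so $AL = K$ and $A \le K$), then $A/(A \cap L) \cong K/L$, $C_G(A/(A \cap L)) = C_G(K/L)$, and, since $L$ centralises $K/L$, $A\, C_G(A/(A \cap L)) = K\, C_G(K/L)$. Hence it suffices to prove the claim $(\star)$: for every open normal $M \unlhd G$ there is a chief factor $K/L$ of $G$ that is a direct power of a member of $\mcC$, with $K \le M$ and $C_G(K/L) \le M$. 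Indeed, given such a $K/L$, Lemma~\ref{minlem} furnishes a pair $(A, A \cap L) \in \mcD$ with $A \le K \le M$ and $A\, C_G(A/(A \cap L)) = K\, C_G(K/L) \le M$; starting from a proper open normal $M_1 \le H$, the resulting pair has a proper open normal top $A_1$, so we may choose a proper open normal $M_2 < A_1$, obtain a pair with top $A_2 \le M_2 < A_1$, and continue, producing infinitely many pairs of $\mcD$ with strictly decreasing tops, each satisfying $A\, C_G(A/B) \le H$.

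To prove $(\star)$, fix $M$ and invoke Theorem~\ref{genob}: only finitely many normal subgroups of $G$ fail to lie in $M$, so there is an open normal $D \le M$ such that every $N \unlhd G$ satisfies $N \le M$ or $D \le N$. With this dichotomy, any chief factor $K/L$ of $G$ that is a direct power of a member of $\mcC$, with $K \le D$ and $[D,K] \not\le L$, has $C_G(K/L) \not\ge D$, hence $C_G(K/L) \le M$ and so $K\, C_G(K/L) \le M$ --- exactly what $(\star)$ asks. So suppose no such chief factor exists; then every $\mcC$-power chief factor $K/L$ of $G$ with $K \le D$ satisfies $[D,K] \le L$, and if such a $K/L$ were non-abelian it would be perfect while $[K,K] \le [D,K] \le L$, forcing $K = L$. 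Thus all $\mcC$-power chief factors of $G$ inside $D$ are direct powers of $C_p$ with $p \in \Pi := \{\, p : C_p \in \mcC \,\}$ and are $D$-central; consequently, for $N \unlhd G$ open with $N \le D$, every $\mcC$-composition factor of $D/N$ is cyclic of a prime order in $\Pi$ and arises from such a $D$-central chief factor.

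The heart of the proof is to contradict this last situation, using $\condb$, Lemma~\ref{opsch} and the hypothesis that $G$ is not virtually pronilpotent. By $\condb$ and $\lvert G : D\rvert < \infty$, $\sup_N \#\{\mcC\text{-composition factors of }D/N\} = \infty$. Build a chain $D = R_0 > R_1 > R_2 > \cdots$ of open normal subgroups of $G$ as follows: given $R_j \le D$ with $\sup_{N \le R_j} \#\{\mcC\text{-composition factors of }R_j/N\} = \infty$, choose $N \unlhd G$, $N \le R_j$, such that $R_j/N$ has a composition factor $C_{p_j}$ with $p_j \in \Pi$. Then $R_j/N$ satisfies the hypotheses of Lemma~\ref{opsch} for $p_j$: it has an exponent-$p_j$ chief factor; every $C_{p_j}$-composition factor of $R_j/N$ comes from a $\mcC$-power $G$-chief-factor inside $D$, hence from a $D$-central, so $R_j$-central, one, so in a chief series of $R_j/N$ refining a $G$-chief-series every exponent-$p_j$ factor is central, whence by Jordan--H\"older for chief series the same holds in every chief series of $R_j/N$; and $p_j$ divides no Schur multiplier of a non-abelian composition factor $S$ of $R_j/N$, since $S$ lies inside $D$ and is non-abelian, so if $p_j$ divided its Schur multiplier the second clause of $\condb$ would place $S$ in $\mcC$, contradicting that no non-abelian $\mcC$-power chief factor lies inside $D$. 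Lemma~\ref{opsch} then gives a normal subgroup of $R_j$ of index $p_j$; let $R_{j+1}$ be the core in $G$ of one such, so $R_{j+1} \unlhd G$ is open, $R_j/R_{j+1}$ is elementary abelian of exponent $p_j$, and, its composition factors being finitely many copies of $C_{p_j} \in \mcC$, the supremum hypothesis is inherited by $R_{j+1}$; the chain thus descends strictly and indefinitely. Now every $G$-chief-factor occurring in $D/R_k$ is a direct power of some $C_p$ with $p \in \Pi$, hence $D$-central, so $D/R_k$ has a central normal series and is nilpotent; and $\bigcap_k R_k = 1$, since otherwise it would be a non-trivial normal subgroup of the just infinite group $G$, hence open, forcing $(R_k)$ to stabilise against strict descent. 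Therefore $D \cong \varprojlim_k D/R_k$ is pronilpotent, so $G$ is virtually pronilpotent, contrary to hypothesis. This contradiction establishes $(\star)$, and with it the lemma.

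\textbf{The main obstacle} is the chain construction in the last paragraph. Because the chief factors of the open subgroups $R_j$ need not be chief factors of $G$, one cannot simply inspect them but must route the needed fact --- that \emph{all} exponent-$p_j$ chief factors of $R_j/N$ are central --- through Jordan--H\"older invariance; and Lemma~\ref{opsch} is indispensable precisely because central chief factors of prime exponent need not produce a quotient of that prime order when perfect central extensions of simple groups occur, which is exactly where the Schur-multiplier clause of $\condb$ is used. Everything else is bookkeeping with Lemma~\ref{minlem} and Theorem~\ref{genob}.
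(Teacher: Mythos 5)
Your proof is correct, and while it runs on the same three ingredients as the paper --- Theorem \ref{genob}, Lemma \ref{minlem} and Lemma \ref{opsch}, together with the failure of virtual pronilpotence --- it assembles them in a genuinely different way. The paper produces infinitely many pairs with $A \le H$ at the outset (letting $R$ range over the normal subgroups inside $H$ and passing to the smallest $S \unlhd R$ such that $R/S$ has no $\mcC$-composition factors), then supposes almost all the centralisers escape $H$, intersects them to obtain an open normal subgroup $M$, and derives its contradiction from a \emph{single} application of Lemma \ref{opsch}: taking $N$ minimal normal in $M$ with $M/N$ having all composition factors in $\mcC$, non-pronilpotence forces $N$ to be open, and $E^p(N) < N$ then violates the minimality of $N$. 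You instead reduce to a one-pair existence claim for each open normal $M$ and iterate with strictly decreasing tops, and in the contradiction step you replace the minimality argument by an explicit infinite descent: repeated applications of Lemma \ref{opsch} yield a strictly decreasing chain of open normal subgroups of $G$ below $D$ with elementary abelian central factors, so that $\bigcap_k R_k = 1$ and $D$ itself is pronilpotent. The two contradictions are mirror images of one another (the paper uses non-pronilpotence to see $N$ is open and then contradicts minimality; you use the descent to contradict non-pronilpotence directly), and your version stays entirely with chief factors of $G$ rather than chief factors of the auxiliary subgroup $M$. A small bonus of your write-up is that you make explicit, via Jordan--H\"older for chief series, why \emph{all} exponent-$p$ chief factors of the finite quotient $R_j/N$ are central --- not merely those refining a $G$-chief series --- which is precisely the hypothesis Lemma \ref{opsch} needs and a point the paper's own application of that lemma leaves implicit.
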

\begin{proof}Given a normal subgroup $R$ of $G$, let $S$ be the smallest normal subgroup of $R$ for which $R/S$ has no composition factors in $\mcC$.  Then $S$ is characteristic in $R$ and hence normal in $G$, and $G/S$ has only finitely many composition factors in $\mcC$.  Hence $S > 1$, so $S$ is of finite index in $G$.  Moreover, by construction, if $L$ is a maximal proper $G$-invariant subgroup of $S$, then all the composition factors of $S/L$ are in $\mcC$.  By letting $R$ range over the normal subgroups of $G$ contained in $H$ and applying Lemma \ref{minlem}, one obtains infinitely many pairs $(A,B) \in \mcD$ such that $A$ is contained in $H$.  Indeed, such critical pairs account for every $\mcC$-composition factor of all finite images of $H$.

It now suffices to assume that for all but finitely many such pairs, $C_G(A/B)$ is not contained in $H$, and derive a contradiction.  As $C_G(A/B)$ is a normal subgroup of $G$, by Theorem \ref{genob} there are only finitely many possibilities for the subgroup $C_G(A/B)$ that are not contained in $H$.  Thus the intersection $M$ of all such centralisers is open.  By the same argument as before, those $(A,B) \in \mcD$ for which $A \leq M$ account for every $\mcC$-composition factor of a finite image of $M$, thus if $P/Q$ is a chief factor of $M$ that is a direct power of a group in $\mcC$, then $[M,P] \leq Q$.

Let $N$ be the smallest normal subgroup of $M$ such that all the composition factors of $M/N$ are in $\mcC$.  Then $M/N$ is pronilpotent and $N$ is normal in $G$, so $G/N$ is virtually pronilpotent.  As $G$ is not virtually pronilpotent, it follows that $N$ has finite index in $G$.  There is $(A,B) \in \mcD$ such that $A$ is contained in $N$.  Note that $A/B$ is central in $N/B$ and hence abelian, say of exponent $p$, so $N/B$ has a chief factor of exponent $p$.  Moreover, all non-abelian finite simple groups appearing as composition factors of $N$ are outside of $\mcC$, and thus have Schur multipliers of order coprime to $p$.  Thus $E^p(N) < N$ by Lemma \ref{opsch}.  As $E^p(N)$ is characteristic in $N$, it is normal in $M$.  But then $M/E^p(N)$ is an image of $M$, all of whose composition factors are in $\mcC$, contradicting the definition of $N$.\end{proof}

\begin{lem}\label{centdec}Let $G$ be a centrally decomposable finite group, let $K$ be a non-central subgroup of $G$, and let $L$ be a normal subgroup of $G$.  Then there is a normal subgroup $H$ of $G$ and a maximal normal subgroup $M$ of $L$ such that $M$ does not contain $H$ and $H$ does not contain $K$.\end{lem}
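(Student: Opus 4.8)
The plan is to take $H$ to be one of the factors in a central decomposition of $G$. Since $G$ is centrally decomposable we may group factors and, discarding any that are redundant, write $G = XY$ with $X$ and $Y$ proper subgroups satisfying $[X,Y]=1$; then $X,Y \unlhd G$, and every element of $X\cap Y$ centralises both $X$ and $Y$, hence all of $G$, so $X\cap Y \le Z(G)$. Because $K$ is non-central it is not contained in $X\cap Y$, so $K\not\le X$ or $K\not\le Y$; interchanging $X$ and $Y$ if necessary, assume $K\not\le X$. I would then set $H=X$, so that $K\not\le H$ is immediate, and it remains to find a maximal normal subgroup $M$ of $L$ with $X\not\le M$. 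Writing $\Delta(L)$ for the intersection of the maximal normal subgroups of $L$ (which exist, as we take $L\neq 1$), this amounts to proving $X\not\le\Delta(L)$.

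To do this I would suppose $X\le\Delta(L)\le L$ and derive a contradiction. Since $X\le L$ we have $L = X(L\cap Y)$: any $l\in L\le XY$ is $l=xy$ with $x\in X\le L$, whence $y=x^{-1}l\in L\cap Y$. Setting $D=X\cap Y$ and $Y_0=L\cap Y$, one checks $D\le Z(G)\cap L\le Z(L)$ and $X\cap Y_0=D$, so $L/D$ is the internal direct product of $X/D$ and $Y_0/D$. As $D\le X\le\Delta(L)$, the subgroup $D$ lies in every maximal normal subgroup of $L$, so the maximal normal subgroups of $L/D$ are exactly the images of those of $L$. Now if $X/D\neq 1$, pick a maximal normal subgroup $M_0$ of $X/D$; then $M_0\times(Y_0/D)$ is a maximal normal subgroup of $L/D$ not containing $X/D$, and its preimage in $L$ is a maximal normal subgroup of $L$ not containing $X$ — contradicting $X\le\Delta(L)$. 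Hence $X/D=1$, i.e. $X=D\le Y$, giving $G=XY=Y$, which contradicts the properness of $Y$. Thus $X\not\le\Delta(L)$; choosing any maximal normal subgroup $M$ of $L$ with $X\not\le M$ and taking $H=X$ then finishes the proof.

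The argument is short, and the one point requiring attention is that the central product $G=XY$ need not be direct, so $D=X\cap Y$ can be a nontrivial central subgroup. Passing to $L/D$ makes the relevant product direct and keeps the maximal normal subgroups under control, so the ostensibly more general central-product hypothesis costs nothing beyond the direct-product case — indeed in the direct-product case $D=1$ and the step $X\le\Delta(L)\Rightarrow X=1$ is immediate. (One should also record the harmless standing assumption $L\neq 1$, without which there is no maximal normal subgroup of $L$ to produce.)
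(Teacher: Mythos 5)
Your argument is correct, and it shares the paper's opening move: pass to a central decomposition of $G$ into proper normal factors and use the fact that the intersection of two commuting factors is central (in the paper, phrased as: two factors both containing $K$ would force $K\le Z(G)$) to find a proper normal factor $H$ with $K\not\le H$. Where you genuinely diverge is in producing the maximal normal subgroup $M$ of $L$. The paper trims the decomposition so that each factor $H_i$ lies outside some maximal normal subgroup $M_i$ of the whole group $G$, and then, when $H_i\le L$, observes $M_iH_i=M_iL=G$ and takes $M=M_i\cap L$; you instead work entirely inside $L$, reformulating the goal as $X\not\le\Delta(L)$ and, assuming the contrary, splitting $L/(X\cap Y)$ as an internal direct product of $X/D$ and $Y_0/D$ so as to exhibit a maximal normal subgroup of $L$ missing $X$ (or else conclude $X\le Y$ and contradict properness of $Y$). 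Your route is a little longer but buys some completeness: the case where the chosen factor is not contained in $L$ --- which the paper's proof passes over in silence --- is automatically absorbed into the statement $X\not\le\Delta(L)$, and you rightly flag that $L\neq 1$ must be assumed for any maximal normal subgroup of $L$ to exist, a hypothesis equally needed for (and equally unstated in) the paper's argument and harmless in the application, where $L=A_n>B_n\geq 1$. The one step worth writing out in full is the reduction to two proper commuting factors $G=XY$: one should take a minimal subfamily of the $H_i$ still generating $G$ (it has at least two members since each $H_i$ is proper) and let $X$ be one member and $Y$ the product of the rest, which is proper by minimality; your phrase ``discarding any that are redundant'' is doing exactly this.
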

\begin{proof}Let $G$ be the central product of proper subgroups $\{H_i \mid i \in I\}$.  Note that the $H_i$ are all normal in $G$, so by removing redundant generators, we may assume that for each $H_i$, there is a maximal normal subgroup of $G$ that does not contain $H_i$; if $L$ contains $H_i$, it follows that $M_iH_i = M_iL = G$, so $M_i \cap L$ is a maximal normal subgroup of $L$ that does not contain $H_i$.  As the $H_i$ are proper, $|I| \geq 2$.  Suppose $H_1$ and $H_2$ both contain $K$.  Then $H_1$ centralises $K$, since it centralises $H_2 \geq K$, and all other $H_i$ centralise $K$ since they centralise $H_1$.  As the $H_i$ generate $G$ it follows that $K \leq Z(G)$, a contradiction.\end{proof}

\begin{lem}[(e.g. \cite{Rei} Lemma 2.2; see also \cite{Zal})]\label{melfin}Let $G$ be a just infinite profinite group and let $H$ be an open subgroup of $G$.  Then $H$ has finitely many maximal open normal subgroups.\end{lem}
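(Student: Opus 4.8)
The plan rests on one structural fact about just infinite groups — that an open normal subgroup of $G$ has only finitely many maximal $G$-invariant subgroups — together with a reduction of the maximal open normal subgroups of $H$ to data lying inside $N := \mathrm{Core}_G(H)$, an open normal subgroup of $G$ contained in $H$.

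I would first establish the structural fact. Let $\tilde N$ be an open normal subgroup of $G$. If $\tilde N$ has no non-trivial proper $G$-invariant closed subgroup there is nothing to prove; otherwise any such subgroup is normal in $G$ and non-trivial, hence open, so $\tilde N$ has a \emph{maximal} proper $G$-invariant subgroup $L_0$, and $L_0$ is open. Applying Theorem~\ref{genob} with the open subgroup $L_0$, all but finitely many normal subgroups of $G$ lie inside $L_0$. But a maximal $G$-invariant subgroup $L$ of $\tilde N$ with $L \le L_0$ must equal $L_0$ (otherwise $LL_0 = \tilde N$ forces $L_0 = \tilde N$), so every maximal $G$-invariant subgroup of $\tilde N$ other than $L_0$ is one of the finitely many normal subgroups of $G$ not contained in $L_0$; hence there are finitely many. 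Iterating along a $G$-chief series of bounded length, it follows that $G$ has only finitely many open normal subgroups of any prescribed finite index. (The case $\tilde N = G$ is already the statement that a just infinite group has finitely many maximal normal subgroups.)

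Now let $M$ be a maximal open normal subgroup of $H$. If $N \le M$ then $M/N$ is a maximal normal subgroup of the finite group $H/N$, so there are finitely many such $M$; assume then $N \not\le M$. Maximality forces $MN = H$, so $D := M \cap N$ is an $H$-invariant subgroup of $N$ with $H/D \cong (M/D)\times(N/D)$, where $N/D \cong H/M$ is a finite simple group $S$ and $D$ is a maximal $H$-invariant subgroup of $N$. Since $D$ is normalised by $H$ and $[G:H]$ is finite, $E := \mathrm{Core}_G(D)$ is the intersection of finitely many conjugates of $D$, each of index $|S|$ in $N$ (conjugation by $G$ acts by automorphisms on $N \unlhd G$); so $N/E$ embeds in a finite direct power of $S$ and thus has all composition factors isomorphic to $S$. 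If $E = 1$ then $N$, hence $G$, would be finite, contrary to hypothesis; so $E$ is a non-trivial normal subgroup of $G$, hence open. Choosing a maximal $G$-invariant subgroup $L$ of $N$ with $E \le L$, the $G$-chief factor $N/L$ is a quotient of $N/E$ and so has composition factors isomorphic to $S$; that is, $S$ is the isomorphism type underlying one of the finitely many (by the structural fact) $G$-chief factors $N/L$. Hence $|S| = [H:M]$ is bounded independently of $M$.

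It remains to deduce that this boundedness forces finitely many $M$ of this kind. Since $[N:E] \le |S|^{[G:H]}$ is bounded, $[G:E]$ is bounded, so by the structural fact $E$ takes only finitely many values; for each such open normal $E$ the group $N/E$ is finite of bounded order, $D/E$ is one of its finitely many subgroups, and $M$ is recovered from $D$ — when $S$ is non-abelian we have $M = C_H(N/D)$, while when $S \cong C_p$ the factor $N/D$ is central in $H/D$ and its complements $M/D$ form a finite set, in bijection with $\mathrm{Hom}(H/N, C_p)$. So $H$ has only finitely many maximal open normal subgroups in all. The one place where just-infiniteness is genuinely needed — and the main point to get right — is the passage "$E \ne 1$, hence $E$ open" (and, in the structural fact, the analogous step that a non-trivial $G$-invariant subgroup is open): this is what turns the a priori unbounded simple sections $N/D$ into objects controlled by generalised obliquity, after which everything is bookkeeping.
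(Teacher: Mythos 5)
The paper does not actually prove this lemma---it is imported from \cite{Rei} (Lemma 2.2), with a pointer to \cite{Zal}---so there is no in-house argument to match; what you have supplied is a self-contained derivation from Theorem~\ref{genob}, and it is correct. Your structural fact (an open normal subgroup of $G$ has only finitely many maximal $G$-invariant subgroups, hence $G$ has finitely many open normal subgroups of any bounded index) is exactly the right consequence of generalised obliquity, and the reduction of a maximal open normal subgroup $M$ of $H$ to the data $D=M\cap N$ and $E=\mathrm{Core}_G(D)$, with $N=\mathrm{Core}_G(H)$, is sound: the case $N\le M$ is trivially finite, and in the case $MN=H$ your identifications ($H/D\cong M/D\times N/D$, $D$ maximal $H$-invariant in $N$, $M=C_H(N/D)$ for non-abelian $H/M$, finitely many normal complements in the abelian case) all check out---indeed once $E$ is known to be open the group $H/D$ is finite and the finiteness of the fibre over $D$ is automatic. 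The one sentence that needs tightening is ``$N/E$ embeds in a finite direct power of $S$ and thus has all composition factors isomorphic to $S$'': for non-abelian $S$ a mere embedding into $S^k$ does not control composition factors (simple groups have proper subgroups), and you must use that $N/E$ is a \emph{subdirect} product of the quotients $N/D^g$---which it is, since $E$ is the intersection of the finitely many conjugates $D^g$, each normal in $N$ with $N/D^g\cong S$---so that $N/E\cong S^j$ (or is elementary abelian when $S$ is cyclic) and the chief factor $N/L$ above $E$ genuinely has type $S$. With that word inserted the argument is complete; its merit over simply citing \cite{Rei} is that it uses nothing beyond Theorem~\ref{genob} as stated in this paper, at the price of some bookkeeping with cores and conjugates.
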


\begin{proof}[Proof of Theorem \ref{thmb}]We will obtain critical pairs $(R_n,S_n)$ in $G$ such that the subgroups $R_n$ have trivial intersection, and then use these to construct the required inverse system.

Set $R_{-1}=S_{-1}=GG$.  Suppose $R_n$ and $S_n$ have been chosen.  If $G$ is hereditarily just infinite, let $I$ be the intersection of all subgroups of $G$ normalised by $R_n$ that are not contained in $M$ as $M$ ranges over the maximal open normal subgroups of $R_n$.  Then $I$ is an open subgroup of $G$, as can be seen by applying Theorem \ref{genob} and then Lemma \ref{melfin} to each of the finitely many open subgroups of $G$ that contain $R_n$.  Otherwise, let $I = S_n$.  Now choose a critical pair $(A,B)$ as in Lemma \ref{seclem} such that $AC_G(A/B)$ is contained in $I$ and such that $A/B$ is a direct power of a group in $\mcC$.  Set $R_{n+1} = A$ and $S_{n+1} = B$.

Set $G_n = G/S_{n+1}$, set $A_n = R_n/S_{n+1}$, set $P_n = R_{n+1}/S_{n+1}$, and set $B_n = S_n/S_{n+1}$.  As $(R_n,S_n)$ in critical in $G$, it follows that $(A_n,B_n)$ is critical in $G_n$.  Moreover, $AC_G(A/B) \leq I \leq S_n$, so $P_n C_{G_n}(P_n)$ is contained in $B_n$ by construction.

Suppose $G$ is hereditarily just infinite, and let $T/S_{n+1}$ be a subgroup normalised by $A_n$ such that not all maximal normal subgroups of $A_n$ contain $T/S_{n+1}$.  Then $T \geq I$ by construction, so $T/S_{n+1}$ contains $P_nC_{G_n}(P_n)$.  As $P_n$ is not central in $A_n$, Lemma \ref{centdec} implies that no subgroup of $G_n$ containing $A_n$ is centrally decomposable.\end{proof}

\end{document}